\begin{document}


\newfont{\teneufm}{eufm10}
\newfont{\seveneufm}{eufm7}
\newfont{\fiveeufm}{eufm5}
%
%
\newfam\eufmfam
 \textfont\eufmfam=\teneufm \scriptfont\eufmfam=\seveneufm
 \scriptscriptfont\eufmfam=\fiveeufm
%
%
\def\frak#1{{\fam\eufmfam\relax#1}}
%


\def\bbbr{{\rm I\!R}} 
\def\bbbm{{\rm I\!M}}
\def\bbbn{{\rm I\!N}} 
\def\bbbf{{\rm I\!F}}
\def\bbbh{{\rm I\!H}}
\def\bbbk{{\rm I\!K}}
\def\bbbp{{\rm I\!P}}
\def\bbbone{{\mathchoice {\rm 1\mskip-4mu l} {\rm 1\mskip-4mu l}
{\rm 1\mskip-4.5mu l} {\rm 1\mskip-5mu l}}}
\def\bbbc{{\mathchoice {\setbox0=\hbox{$\displaystyle\rm C$}\hbox{\hbox
to0pt{\kern0.4\wd0\vrule height0.9\ht0\hss}\box0}}
{\setbox0=\hbox{$\textstyle\rm C$}\hbox{\hbox
to0pt{\kern0.4\wd0\vrule height0.9\ht0\hss}\box0}}
{\setbox0=\hbox{$\scriptstyle\rm C$}\hbox{\hbox
to0pt{\kern0.4\wd0\vrule height0.9\ht0\hss}\box0}}
{\setbox0=\hbox{$\scriptscriptstyle\rm C$}\hbox{\hbox
to0pt{\kern0.4\wd0\vrule height0.9\ht0\hss}\box0}}}}
\def\bbbq{{\mathchoice {\setbox0=\hbox{$\displaystyle\rm
Q$}\hbox{\raise
0.15\ht0\hbox to0pt{\kern0.4\wd0\vrule height0.8\ht0\hss}\box0}}
{\setbox0=\hbox{$\textstyle\rm Q$}\hbox{\raise
0.15\ht0\hbox to0pt{\kern0.4\wd0\vrule height0.8\ht0\hss}\box0}}
{\setbox0=\hbox{$\scriptstyle\rm Q$}\hbox{\raise
0.15\ht0\hbox to0pt{\kern0.4\wd0\vrule height0.7\ht0\hss}\box0}}
{\setbox0=\hbox{$\scriptscriptstyle\rm Q$}\hbox{\raise
0.15\ht0\hbox to0pt{\kern0.4\wd0\vrule height0.7\ht0\hss}\box0}}}}
\def\bbbt{{\mathchoice {\setbox0=\hbox{$\displaystyle\rm
T$}\hbox{\hbox to0pt{\kern0.3\wd0\vrule height0.9\ht0\hss}\box0}}
{\setbox0=\hbox{$\textstyle\rm T$}\hbox{\hbox
to0pt{\kern0.3\wd0\vrule height0.9\ht0\hss}\box0}}
{\setbox0=\hbox{$\scriptstyle\rm T$}\hbox{\hbox
to0pt{\kern0.3\wd0\vrule height0.9\ht0\hss}\box0}}
{\setbox0=\hbox{$\scriptscriptstyle\rm T$}\hbox{\hbox
to0pt{\kern0.3\wd0\vrule height0.9\ht0\hss}\box0}}}}
\def\bbbs{{\mathchoice
{\setbox0=\hbox{$\displaystyle \rm S$}\hbox{\raise0.5\ht0\hbox
to0pt{\kern0.35\wd0\vrule height0.45\ht0\hss}\hbox
to0pt{\kern0.55\wd0\vrule height0.5\ht0\hss}\box0}}
{\setbox0=\hbox{$\textstyle  \rm S$}\hbox{\raise0.5\ht0\hbox
to0pt{\kern0.35\wd0\vrule height0.45\ht0\hss}\hbox
to0pt{\kern0.55\wd0\vrule height0.5\ht0\hss}\box0}}
{\setbox0=\hbox{$\scriptstyle  \rm S$}\hbox{\raise0.5\ht0\hbox
to0pt{\kern0.35\wd0\vrule height0.45\ht0\hss}\raise0.05\ht0\hbox
to0pt{\kern0.5\wd0\vrule height0.45\ht0\hss}\box0}}
{\setbox0=\hbox{$\scriptscriptstyle\rm S$}\hbox{\raise0.5\ht0\hbox
to0pt{\kern0.4\wd0\vrule height0.45\ht0\hss}\raise0.05\ht0\hbox
to0pt{\kern0.55\wd0\vrule height0.45\ht0\hss}\box0}}}}
\def\bbbz{{\mathchoice {\hbox{$\sf\textstyle Z\kern-0.4em Z$}}
{\hbox{$\sf\textstyle Z\kern-0.4em Z$}}
{\hbox{$\sf\scriptstyle Z\kern-0.3em Z$}}
{\hbox{$\sf\scriptscriptstyle Z\kern-0.2em Z$}}}}
\def\ts{\thinspace}

\newtheorem{theorem}{Theorem}
\newtheorem{lemma}[theorem]{Lemma}
\newtheorem{definition}{Definition}
\newtheorem{question}[theorem]{Open Question}

\newcommand{\algol}[2]{\ensuremath{\mathsf{#1}(#2)}}
\newcommand{\aln}[1]{\ensuremath{\mathsf{#1}}}
\newcommand{\ex}[2]{\ensuremath{\mathbf{#1}(#2)}}
\newcommand{\exn}[1]{\ensuremath{\mathbf{#1}}}
\newcommand{\scm}[2]{\ensuremath{\mathcal{#1}(#2)}}
\newcommand{\scn}[1]{\ensuremath{\mathcal{#1}}}

\newenvironment{proof}{\noindent{\it Proof. }}{{\qed}}

\def\squareforqed{\hbox{\rlap{$\sqcap$}$\sqcup$}}
\def\qed{\ifmmode\squareforqed\else{\unskip\nobreak\hfil
\penalty50\hskip1em\null\nobreak\hfil\squareforqed
\parfillskip=0pt\finalhyphendemerits=0\endgraf}\fi}

\def\cA{{\mathcal A}}
\def\cB{{\mathcal B}}
\def\cC{{\mathcal C}}
\def\cD{{\mathcal D}}
\def\cE{{\mathcal E}}
\def\cF{{\mathcal F}}
\def\cG{{\mathcal G}}
\def\cH{{\mathcal H}}
\def\cI{{\mathcal I}}
\def\cJ{{\mathcal J}}
\def\cK{{\mathcal K}}
\def\cL{{\mathcal L}}
\def\cM{{\mathcal M}}
\def\cN{{\mathcal N}}
\def\cO{{\mathcal O}}
\def\cP{{\mathcal P}}
\def\cQ{{\mathcal Q}}
\def\cR{{\mathcal R}}
\def\cS{{\mathcal S}}
\def\cT{{\mathcal T}}
\def\cU{{\mathcal U}}
\def\cV{{\mathcal V}}
\def\cW{{\mathcal W}}
\def\cX{{\mathcal X}}
\def\cY{{\mathcal Y}}
\def\cZ{{\mathcal Z}}
\newcommand{\rmod}[1]{\: \mbox{mod} \: #1}

\def\Tr{{\mathrm{Tr}}}

\def\mand{\qquad \mbox{and} \qquad}

\def\eqref#1{(\ref{#1})}


\newcommand{\ignore}[1]{}

\newcommand{\comm}[1]{\marginpar{%
\vskip-\baselineskip 
\raggedright\footnotesize
\itshape\hrule\smallskip#1\par\smallskip\hrule}}

\hyphenation{re-pub-lished}

\def\lln{{\mathrm Lnln}}
\def\ad{{\mathrm ad}}
\def\scr{\scriptstyle}

\def \C{{\bbbc}}
\def \F{{\bbbf}}
\def \K{{\bbbk}}
\def \Z{{\bbbz}}
\def \N{{\bbbn}}
\def \Q{{\bbbq}}
\def \R{{\bbbr}}
\def\Fp{\F_p}
\def \fp{\Fp^*}
\def \Zm{\Z_m}
\def \Um{{\mathcal U}_m}

\def\Km{\cK_\mu}

\def \va {{\mathbf a}}
\def \vb {{\mathbf b}}
\def \vc {{\mathbf c}}
\def \vx{{\mathbf x}}
\def \vr {{\mathbf r}}
\def \vv {{\mathbf v}}
\def \vu{{\mathbf u}}
\def \vw{{\mathbf w}}
\def \vz {{\mathbf z}}

\def\\{\cr}
\def\({\left(}
\def\){\right)}
\def\fl#1{\left\lfloor#1\right\rfloor}
\def\rf#1{\left\lceil#1\right\rceil}

\def\flq#1{{\left\lfloor#1\right\rfloor}_q}
\def\flp#1{{\left\lfloor#1\right\rfloor}_p}

\def\Al{{\sl Alice}}
\def\Bob{{\sl Bob}}

\def\Or{{\mathcal O}}

\def\invM#1{\mbox{\rm {inv}}_M\,#1}
\def\invp#1{\mbox{\rm {inv}}_p\,#1}

\def\Ln#1{\mbox{\rm {Ln}}\,#1}

\def \nd {\, | \hspace{-1.2mm}/\,}

\def\ord{\mu}

 \def\e{\mbox{\bf{e}}}

\def\ep{\mbox{\bf{e}}_p}
\def\eq{\mbox{\bf{e}}_q}
\newcommand{\floor}[1]{\lfloor {#1} \rfloor }

\def\rem{{\mathrm{\,rem\,}}}
\def\dist {{\mathrm{\,dist\,}}}


\begin{frontmatter}

\title{On RSA Moduli with Almost Half of the Bits Prescribed}

\author{Sidney W. Graham}

\address{{Department of Mathematics, Central Michigan University}\\
 Mount Pleasant, MI 48859,
USA\\ {\tt sidney.w.graham@cmich.edu}}
%

\author{Igor E. Shparlinski}

\address{{Department of Computing, 
Macquarie University}\\ {Sydney, NSW 2109, Australia}\\    {\tt 
igor@ics.mq.edu.au}
}

\date{\today}


\begin{abstract}
We show that using character sum estimates due to H.~Iwaniec
leads to an improvement of recent results
about the distribution and finding  RSA
moduli $M=pl$, where $p$ and $l$ are primes,
with prescribed bit patterns. We are now able to specify
about $n$ bits  instead of about $n/2$ bits as in the
previous work.
We also show that the same result of H.~Iwaniec
can be used to obtain an unconditional
version of a combinatorial result of W.~de~Launey and D.~Gordon
that was originally  derived under the Extended Riemann Hypothesis.
\end{abstract}

\begin{keyword}
RSA, bit pattern, sparse integer, smooth
integer, character  sum,
Hadamard matrices
\end{keyword}

\end{frontmatter}

\newcommand{\zo }[0]{\{0,1\} }
\newcommand{\ind}[0]{\stackrel{\mbox{\tiny c}}{\approx}}
\newcommand{\p}[0]{\mbox{\rm Prob}}

\section{Introduction}

For an integer $n$, we use $\cP_n$ to denote the set of
primes $p$ with $2^{n-1} < p< 2^n$.
Let $\cM_n$ be the set of RSA moduli $M = p \ell$
that are products of two distinct primes $p,  \ell \in \cP_n$.

Thus each $M\in \cM_n$ has either $2n-1$ or $2n$ bits
which we number from the right to the left.

Motivated by some cryptographic applications (in particular 
by the idea of reducing the size of the public key), various heuristic
algorithms have been given to construct moduli $M \in \cM_n$ having a
sufficiently long specified  bit pattern have been
given in~\cite{LenA,VaZu}. Unfortunately, giving a rigorous analysis
of these algorithms require a very strong form of {\it Linnik's
Theorem\/}, which far exceeds our current state of knowledge.

A different algorithm was  proposed in~\cite{Shp}.
Certainly this algorithm is likely to produce moduli having shorter
prescribed bit patterns than those of~\cite{LenA,VaZu}.
However, using exponential sums,
this algorithm has been rigorously analysed and shown to
output in expected polynomial time
a desired modulus $M \in \cM_n$ with about $n/2$ prescribed bits.

Here we use the bound of character sums of H.~Iwaniec~\cite{Iwan}
(see also~\cite{FuGaMo,Gal} and references therein)
instead of bounds on exponential sums.
This allows us to show that in fact the same algorithm can be used to generate
in expected polynomial time
a desired RSA modulus  $M \in \cM_n$  with about $n$ prescribed bits.

Our result immediately yields
an improvement Theorem~5 in~\cite{Shp}, producing RSA
moduli $M\in \cM_n$ with with at least
$(3/2 + o(1))n$ zero bits.
As in~\cite{Shp} we remark that such moduli may be useful
for the {\it Paillier cryptosystem\/}, see~\cite{Pail},
where one computes $M$th powers.

We also outline some possible applications of the same ideas to generating
sparse RSA moduli and  smooth numbers (that is, numbers
free of large prime factors) with a prescribed bit pattern,
hence improving some other results of~\cite{Shp}.

We end up with an observation that the results
of~\cite{Iwan} can also be used to eliminate the
assumption of  the Extended Riemann Hypothesis
from a result of W.~de~Launey and D.~Gordon~\cite{dLaGor}

Throughout the paper, $\cP$ denotes the set of primes and $\ln z$ denotes the
natural logarithm of $z>0$.

\section{RSA Moduli with Prescribed Bit Patterns}
\label{sec:RSA-Prescr}

We recall the algorithm of~\cite{Shp} to generate an RSA modulus $M$
having a desired bit pattern on certain positions.

Given a binary string $\sigma$ of length $m$, we denote by
$\cM_{n,m}(\sigma)$ the set consisting  of $M \in \cM_n$
such that the bits of $M$ at the positions $n-1, \ldots, n-m$
form the binary string  $\sigma$.

\begin{center}
\ \\
\bf Algorithm~\algol{RSA-Modulus}{n,m,\sigma}
\end{center}

\begin{enumerate}
\item[Step~1]  Choose an odd
integer $k$ in the interval $1 \le k < 2^{n-m}$
and a  prime $p \in \cP_n$ uniformly at random.

\item[Step~2]  Compute the positive integer $r < 2^n$
which satisfies the congruence
$$ pr  \equiv 2^{n-m} s + k \pmod{2^n},
$$
where $s$ is the integer whose binary representation coincides with $\sigma$.

\item[Step~3] Test whether
$2^{n-1} < r$, $r \ne p$ and also test $r$ for primality,
if $r$ is prime then put $l = r$ and output
$M = pl$, otherwise go to Step~1 and start a new round
of the algorithm.
\end{enumerate}

Certainly, if  Algorithm~{\sc RSA-Modulus}$(n,m,\sigma)$ terminates it outputs
$M \in \cM_{n,m}( \sigma)$.

\begin{theorem}
\label{thm:PrescrRSA}
For  $m =  \fl{n  - n^{3/4} \ln n}$ and any binary string $\sigma$
of length $m$, Algorithm~{\sc RSA-Modulus}$(n,m,\sigma)$
terminates in expected  polynomial time.
\end{theorem}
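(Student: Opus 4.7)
\medskip

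\noindent\emph{Proof plan.}
The plan is to show that a single round of Algorithm~{\sc RSA-Modulus}$(n,m,\sigma)$ succeeds with probability at least $n^{-O(1)}$. Since each round runs in polynomial time (generating a random prime in $\cP_n$, solving the linear congruence modulo $2^n$, and running a polynomial-time primality test on the resulting $r$), this immediately yields the claimed expected polynomial running time. A successful round is in bijective correspondence with an ordered pair $(p,\ell)\in\cP_n\times\cP_n$ satisfying $p\ne\ell$ and $p\ell\in\cM_{n,m}(\sigma)$, so if $N$ denotes the number of such pairs, the per-round success probability equals $N/(|\cP_n|\cdot 2^{n-m-1})$. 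Using $|\cP_n|\asymp 2^n/n$, it thus suffices to establish the heuristic lower bound
\[
N\;\gg\;|\cP_n|^2\cdot 2^{-m}\;\asymp\;2^{2n-m}/n^2 .
\]

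To estimate $N$, I would set $A=[2^{n-m}s,\,2^{n-m}(s+1))$, so that the condition $p\ell\in\cM_{n,m}(\sigma)$ becomes $p\ell\bmod 2^n\in A$, and expand the indicator $\mathbf 1_A$ in Dirichlet characters of the group $(\Z/2^n\Z)^*$ (on which the odd integer $p\ell$ always lives). This gives
\[
  N \;=\; \frac{|A|\,|\cP_n|^2}{2\phi(2^n)}
   + \frac{1}{\phi(2^n)}\sum_{\chi\ne\chi_0}
     \Bigl(\sum_{\substack{a\in A\\a\text{ odd}}}\bar\chi(a)\Bigr)
     \Bigl(\sum_{p\in\cP_n}\chi(p)\Bigr)^{2}
   + O(|\cP_n|),
\]
where the first summand equals the desired main term $|A|\,|\cP_n|^2/2^n$ and the $O(|\cP_n|)$ accounts for the diagonal $p=\ell$. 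The goal is to show that the non-principal character contribution $E$ is of lower order than the main term.

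Combining the two Parseval identities
\[
  \sum_{\chi}\Bigl|\sum_{\substack{a\in A\\a\text{ odd}}}\bar\chi(a)\Bigr|^{2}=\frac{\phi(2^n)|A|}{2},\qquad
  \sum_{\chi}\Bigl|\sum_{p\in\cP_n}\chi(p)\Bigr|^{2}=\phi(2^n)\,|\cP_n|,
\]
with Cauchy--Schwarz reduces the error to $|E|\le \max_{\chi\ne\chi_0}|\sum_{p}\chi(p)|\cdot\sqrt{|A|\,|\cP_n|/2}$, and a short calculation shows that the needed bound $E=o(|A|\,|\cP_n|^2/2^n)$ amounts to $\max_{\chi\ne\chi_0}|\sum_{p}\chi(p)|=o\bigl(2^{n-m/2}/n^{3/2}\bigr)$. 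This is exactly where the character sum estimate of~\cite{Iwan}, which yields essentially square-root-type cancellation for sums over primes against a non-principal character modulo the prime power $2^n$, is brought to bear; inserting it produces the required saving as soon as $n-m\ge n^{3/4}\ln n$, which establishes the lower bound on $N$ and hence the theorem.

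The main obstacle is precisely this last character-sum input. Standard unconditional tools such as Siegel--Walfisz fail dramatically for moduli as large as $q=2^n$, and generic P\'olya--Vinogradov or Burgess estimates do not reach square-root strength in this range. The crucial feature of Iwaniec's argument is that it exploits the $p$-adic structure of characters modulo a high prime power, thereby producing the square-root-type cancellation that lies out of reach of the purely additive exponential sum bounds used in~\cite{Shp}; this is what lifts the number of admissible prescribed bits from roughly $n/2$ up to $n-n^{3/4}\ln n$.
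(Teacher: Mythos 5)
Your overall framework---expanding the indicator of the target window in multiplicative characters modulo $2^n$, isolating the main term $|A|\,|\cP_n|^2/2^n$, reducing the theorem to a lower bound for $N$, and using Parseval identities for the two factors---is essentially the paper's setup, and the probabilistic reduction to ``one round succeeds with probability $\gg 1/n$'' is fine. The genuine gap is in the last step of your error estimate. After Cauchy--Schwarz you take the supremum over the \emph{prime} factor and require $\max_{\chi\ne\chi_0}\bigl|\sum_{p\in\cP_n}\chi(p)\bigr|=o\bigl(2^{n-m/2}/n^{3/2}\bigr)$, which for $m=\fl{n-n^{3/4}\ln n}$ demands essentially square-root cancellation in a character sum over primes to the modulus $2^n$. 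No such bound is known unconditionally---it is of GRH strength for the corresponding $L$-functions---and, crucially, it is not what Iwaniec~\cite{Iwan} provides. The result invoked in the paper (his Lemma~6) is a Postnikov-type estimate for a character sum over an \emph{interval of consecutive integers}, namely $\sum_{0\le k<H}\chi(2^{n-m}s+k)\ll Hn^{-2}$ once $H\gg 2^{n^{3/4}\ln n}$, exploiting the special structure of characters modulo a high power of $2$; it gives only a modest saving, not square-root cancellation, and it says nothing about sums over primes. So as written your argument establishes the theorem only under GRH, which defeats the purpose of the paper.

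The repair is to distribute the work the other way around: estimate the \emph{interval} factor pointwise by Iwaniec's bound (this is exactly where the hypothesis $n-m\ge n^{3/4}\ln n$ enters), and treat the prime sums only in mean square via the orthogonality relation
$$
\sum_{\chi}\Bigl|\sum_{p\in\cP_n}\chi(p)\Bigr|^{2}=2^{n-1}\,\#\cP_n ,
$$
which follows from the diagonal $p=\ell$. This yields an error $\ll 2^{n-m}\,\#\cP_n/n^{2}$, which is $O(1/n)$ times the main term $2^{-m}(\#\cP_n)^{2}$ since $\#\cP_n\gg 2^{n}/n$; no pointwise information about $\sum_{p}\chi(p)$ is needed anywhere. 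Your closing paragraph misattributes to Iwaniec a square-root-type bound for prime sums; his actual contribution here is the short-interval character sum estimate (and, elsewhere in the paper, zero-density information for $L$-functions to moduli $2^r$), and recognizing this is what makes the argument unconditional.
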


\begin{proof} As in~\cite{Shp}, for an
integer  $0 \le k \le 2^{n-m}-1$, we denote by
$N(k)$ the number of solutions $p,l \in \cP_n$ to the
congruence $ p l  \equiv 2^{n-m} s +k\pmod{2^n}$
where binary representation of
the integer $s$ is given by the  string $\sigma$
(certainly $N(k) = 0$ for every even $k$).

Let $\cX$ be the set of multiplicative characters modulo $2^n$
(see~\cite[Chapter~3]{IwKow} for a background on characters and 
character sums).

We also use $\cX^*$ to denote the set of nonprincipal characters.
We recall the orthogonality relation
\begin{equation}
\label{eq:ident}
\sum_{\chi\in \cX} \chi( u) =
\left\{ \begin{array}{ll}
0,& \quad \mbox{if}\ u\not \equiv 1 \pmod {2^n}, \\
2^{n-1},& \quad \mbox{if}\ u \equiv 1 \pmod {2^n},
\end{array} \right.
\end{equation}
see~\cite[Section~3.2]{IwKow}.

By~\eqref{eq:ident}, we have
$$
N(k)= \frac{1}{2^{n-1}}
\sum_{p, \ell \in \cP_n}
\sum_{\chi\in \cX} \chi\((2^{n-m} s -  k)p^{-1}\ell^{-1} \),
$$
where the inverse values $p^{-1}$ and $\ell^{-1}$ are taken
modulo $2^n$.

Changing the order of summation and separating the term $(\#\cP_n)^2 2^{-n+1}$
corresponding to the principal character we obtain
\begin{eqnarray*}
N(k) & = &(\#\cP_n)^2 2^{-n+1}  +
\frac{1}{2^{n-1}}\sum_{\chi\in \cX^*} \chi\(2^{n-m} s  + k\)
\sum_{p,l \in \cP_n} \chi\( p^{-1} \ell^{-1} \)\\
& = &(\#\cP_n)^2 2^{-n+1}  +
\frac{1}{2^{n-1}}\sum_{\chi\in \cX^*} \chi\(2^{n-m} s  + k\)
   \(\sum_{p  \in \cP_n}  \chi\(p^{-1}\)\)^2.
\end{eqnarray*}
Therefore
\begin{equation}
\label{eq:aver N(k)}
\sum_{k=0}^{2^{n-m}-1} N(k) = \frac{(\#\cP_n)^2} {2^{m}}  +  \Delta,
\end{equation}
where
$$
\Delta = \frac{1}{2^{n-1}} \sum_{\chi\in \cX^*}
\sum_{k=0}^{2^{n-m}-1}
\chi\(2^{n-m} s  + k\)  \(\sum_{p  \in \cP_n} \chi\(p^{-1}\)\)^2.
$$
Using the triangle inequality, we conclude that
\begin{eqnarray*}
|\Delta| & = & \frac{1}{2^{n-1}} \left| \sum_{\chi\in \cX^*}
\sum_{k=0}^{2^{n-m}-1}
\chi\(2^{n-m} s  + k\)  \(\sum_{p  \in \cP_n} \chi\(p^{-1}\)\)^2\right|\\
& \le & \frac{1}{2^{n-1}}\sum_{\chi\in \cX^*}\left|  \sum_{k=0}^{2^{n-m}-1}
\chi\(2^{n-m} s  + k\) \right| \left|  \sum_{p  \in \cP_n} \chi\(p^{-1}\)\right|^2\\
& = & \frac{1}{2^{n-1}}\sum_{\chi\in \cX^*}\left|  \sum_{k=0}^{2^{n-m}-1}
\chi\(2^{n-m} s  + k\) \right| \left|  \sum_{p  \in \cP_n} \chi\(p\)\right|^2.
\end{eqnarray*}
since the values of $\chi(p)$ and $\chi\(p^{-1}\)$ are conjugated over $\C$.

We now recall that by~\cite[Lemma 6]{Iwan},
$$
   \sum_{k=0}^{2^{n-m}-1}
\chi\(2^{n-m} s  + k\)  \ll 2^{n-m} n^{-2}
$$
provided that
$$
2^{n-m} \gg 2^{n^{3/4} \ln n}.
$$
which is satisfied for our choice of $m$.

Therefore
\begin{eqnarray*}
|\Delta| & \ll  & \frac{1}{2^{m} n^2 } \sum_{\chi\in \cX^*} \left|
\sum_{p  \in \cP_n}
\chi\(p\)\right|^2 \le \frac{1}{2^{m} n^2 } \sum_{\chi\in \cX} \left|
\sum_{p  \in \cP_n}
\chi\(p\)\right|^2\\
& =  & \frac{1}{2^{m} n^2 } \sum_{p, \ell  \in \cP_n}  \sum_{\chi\in \cX}
\chi\(p \ell^{-1}\)  .
\end{eqnarray*}
By~\eqref{eq:ident} we see that inner sum vanishes unless
$p \equiv \ell \pmod {2^n}$, which is equivalent to $p = \ell$.
Therefore
\begin{equation}
\label{eq:Bound Delta}
|\Delta| \ll \frac{2^{n-m} \#\cP_n}{ n^2 } .
\end{equation}

Since $\# \cP_n \gg 2^n n^{-1}$, substituting the bound~\eqref{eq:Bound Delta}
in~\eqref{eq:aver N(k)} we derive
$$
\sum_{k=0}^{2^{n-m}-1} N(k) =
\left(1 + O(n^{-1}) \right)\frac{(\#\cP_n)^2} {2^{m}}.
$$
This is a full analogue of the asymptotic formula~(4) in~\cite{Shp}
(except that the value of $m$ is now different). Accordingly, the
rest of the proof is
identical to that of Theorem~4 in~\cite{Shp}.
\end{proof}

As we have remarked, Theorem~\ref{thm:PrescrRSA} immediately yields
the following improvement Theorem~5 in~\cite{Shp} which can be
have some application for 
for the {\it Paillier cryptosystem\/} (see~\cite{Pail}).

\begin{cor}
\label{cor:SparseRSA}
For  $m =  \rf{n -  n^{3/4} \ln n}$ and the $m$-dimensional
zero string $\vartheta=(0, \ldots, 0)$ of length $m$,  Algorithm~{\sc
RSA-Modulus}$(n,m,\vartheta)$ terminates in expected  polynomial time 
and with probability $1
+ o(1)$ outputs a
modulus $M \in \cM_n$ with at most $(1/2 + o(1))n $ nonzero bits.
\end{cor}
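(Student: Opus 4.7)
The plan is to combine Theorem~\ref{thm:PrescrRSA} with an elementary counting argument. The expected polynomial termination is already delivered by the theorem, so only the Hamming weight bound requires a separate argument. First I observe that each successful round of \algol{RSA-Modulus}{n,m,\vartheta} outputs a uniformly random element of $\cM_{n,m}(\vartheta)$: any $M = p_0 \ell_0 \in \cM_{n,m}(\vartheta)$ arises from exactly the two pairs $(k, p) = (p_0\ell_0 \bmod 2^n, p_0)$ and $(p_0\ell_0 \bmod 2^n, \ell_0)$ in Step~1, both of which have equal prior probability under the uniform draw (note that $k$ is odd since $p_0\ell_0$ is, and $k < 2^{n-m}$ by the bit-pattern condition on $M$).

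Next, the asymptotic formula $\sum_k N(k) = (1+o(1))(\#\cP_n)^2/2^m$ from the proof of Theorem~\ref{thm:PrescrRSA} (together with $\#\cP_n \gg 2^n/n$) yields $|\cM_{n,m}(\vartheta)| \gg 2^{2n-m}/n^2$. On the other hand, every $M \in \cM_{n,m}(\vartheta)$ belongs to the larger set $\cI$ of integers in $[0, 2^{2n})$ satisfying only the bit-pattern constraint, for which $|\cI| = 2^{2n-m} =: 2^L$ with $L = n + O(n^{3/4}\ln n)$. Hence $|\cM_{n,m}(\vartheta)|/|\cI| \gg 1/n^2$.

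The main step is a Chernoff-type estimate: since the Hamming weight of $M \in \cI$ is simply the number of ones among the $L$ free bit positions, the count of $M \in \cI$ with weight at least $L/2 + t$ is bounded by $2^L \exp(-2t^2/L)$. Choosing $\varepsilon = n^{-1/4}(\ln n)^2 = o(1)$ and $t = \varepsilon n - (L-n)/2 \gg n^{3/4}(\ln n)^2$, the fraction of $\cI$ with weight exceeding $(1/2+\varepsilon)n$ is $\exp(-\Omega(n^{1/2}(\ln n)^4)) = n^{-\omega(1)}$, which overwhelms the $1/n^2$ density of $\cM_{n,m}(\vartheta)$ inside $\cI$. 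Consequently, the fraction of outputs $M \in \cM_{n,m}(\vartheta)$ whose Hamming weight exceeds $(1/2+o(1))n$ is itself $o(1)$, establishing the corollary.

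No step poses a substantive obstacle once Theorem~\ref{thm:PrescrRSA} is granted. The only mildly delicate point is calibrating $\varepsilon$: it must satisfy both $\varepsilon = o(1)$ (to match the statement) and $t^2/L \gg \ln n$ (so that the Chernoff tail beats the $1/n^2$ density loss), and the value $\varepsilon = n^{-1/4}(\ln n)^2$ comfortably meets both requirements.
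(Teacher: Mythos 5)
Your argument is correct and is essentially the derivation the paper intends: the paper states the corollary as an immediate consequence of Theorem~\ref{thm:PrescrRSA}, deferring to the analogous deduction of Theorem~5 from Theorem~4 in~\cite{Shp}, which rests on exactly the two facts you supply --- that conditioned on success the output is uniform on $\cM_{n,m}(\vartheta)$ (each $M=p_0\ell_0$ arising from precisely two choices in Step~1), and that the binomial tail bound over the $2n-m=n+O(n^{3/4}\ln n)$ free bit positions beats the $\gg n^{-2}$ density of $\cM_{n,m}(\vartheta)$ among all integers with the prescribed zero pattern. Your calibration of $\varepsilon$ is sound, so the proof is complete.
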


\section{Other Applications}

One can use a similar approach to improve Theorem~6
of~\cite{Shp} which guarantees the existence of
certain smooth  numbers with prescribed bit
patterns.

Moreover, without any substantial changes,
an analogue of Theorem~\ref{thm:PrescrRSA} can be
obtained for the values of the Euler function
$\varphi(p \ell) = (p-1) (\ell -1)$. For example,
one can prove that for any $r$, there are $r$-bit integers
$R$ such that the binary expansion of
$\varphi(R)$ contains  $(3/4 + o(1))r$ nonzero digits.
This can be extended to $g$-ary expansions for any
base $g$.

Finally, we conclude with noticing that the 
results of~\cite{Gal} and~\cite{Iwan}
have direct implications on the distribution of primes in
arithmetic progressions modulo $2^n$. 
P.~X.~Gallagher~\cite{Gal}  
proves that if $q=p^r$ ($p$ odd)
and if $q\cdot x^{3/5+\epsilon} \le h \le x$, then 
\begin{equation}
\label{eq:PrimesInAP}
\psi(x+h,q,a)-\psi(x,q,a) \sim \frac{h}{\varphi(q)}
\end{equation}
whenever $(a,q)=1$, where, as usual, 
$$\psi(x,q,a) = \sum_{\substack{k \le x\\ k \equiv a \pmod q}} \Lambda(k)
$$
and
$$\Lambda(k)=
\begin{cases}
\log p&\qquad\text{if $k$ is a power of a prime $p$,} \\
0&\qquad\text{otherwise,}
\end{cases}$$
is the von Mangoldt function, see~\cite[Chapter~5.9]{IwKow}. 
The exponent $3/5$ came from appealing to a zero-density estimate of 
H.~L.~Montgomery~\cite{Mon}.
For technical reasons, P.~X.~Gallagher~\cite{Gal}  
excludes consideration of the case 
$p=2$, but his proof can be easily modified to this case.
The details of this modification (and much more) 
have been 
given by H.~Iwaniec~\cite{Iwan}.
By using a zero-density result of Huxley~\cite{Hux}
in conjunction with~\cite{Iwan}, one sees that 
\eqref{eq:PrimesInAP} is true with $q=2^r$ and 
$q\cdot x^{7/12+\epsilon} \le h \le x$. 
This result can be used in place of the Extended Riemann Hypothesis
in the paper of W.~de~Launey and D.~Gordon~\cite{dLaGor}. 
In particular, in the last undisplayed equation on page~184
of~\cite{dLaGor}, one may take 
$y=\fl{n^{7/12+\epsilon}}.$
In turn,  this yields an unconditional version of
Theorem~1.2 of~\cite{dLaGor}, albeit  with a weaker error term:
$$
r(N) \ge \frac{N}{2} + O(N^{113/132 + o(1)})
$$
for any $N \equiv 0 \pmod 4$, where $r(N)$ is the largest $R$
for which there is a $R\times N$ Hadamard matrix  (that is,
$\pm 1$ matrix $H$ with $HH^{T} = N I_R$,
where $I_R$ is the $R\times R$ identity matrix).
The exponent $113/132$ arises as
$$
    \frac{\alpha}{1+\alpha} + \frac{7}{12} = \frac{113}{132},
$$
where, as in~\cite{dLaGor}, we take $\alpha=3/8$. 
In the conditional result of 
W.~de~Launey and D.~Gordon~\cite{dLaGor},
the $7/12$ term is replaced by $1/2$, thus giving the 
exponent
$$\frac{7}{22} =\frac{113}{132}-\frac{1}{12}.$$


\begin{thebibliography}{99}
 

\bibitem{dLaGor}
W.~de~Launey and D.~Gordon, `A comment on the Hadamard conjecture',
{\it J. Combin. Theory, Ser. A\/}, {\bf 95} (2001),  180--184.

\bibitem{FuGaMo} A. Fujii, P. X. Gallagher  and H. L. Montgomery,
`Some hybrid bounds for
character sums and Dirichlet $L$-functions',
{\it Topics in Number Theory, Colloq. Math. Soc. Janos
Bolyai, vol.13\/}, North--Holland, Amsterdam, 1976, 41--57.

\bibitem{Gal} P. X.  Gallagher,
`Primes in progressions to prime-power modulus',
{\it Invent. Math.\/}, {\bf 16} (1972), 191--201.

\bibitem{Hux} M. N. Huxley, 
`Large values of Dirichlet polynomials. III',
{\it Acta Arith.\/}, {\bf 26} (1974/75), 435--444.
 

\bibitem{Iwan} H. Iwaniec,
`On zeros of Dirichlet's $L$ series',
{\it Invent. Math.\/}, {\bf  23} (1974), 97--104.

\bibitem{IwKow} H. Iwaniec and E. Kowalski,
{\it Analytic number theory\/}, AMS Colloquium
Publications, Vol. 53, Amer. Math. Soc.,
2004.



\bibitem{LenA} A. K. Lenstra,
`Generating RSA moduli  with a predetermined portion',
{\it Lect. Notes in Comp.  Sci.\/}, Springer-Verlag,  Berlin,
{\bf 1514} (1998),  1--10.
 
\bibitem{Mon} H. L. Montgomery, {\it Topics in multiplicative number 
theory\/}, Lecture Notes in Mathematics, vol.227, 
Springer-Verlag,  Berlin,  1971.

\bibitem{Pail} P. Paillier, `Public key cryptosystems based on
composite degree residuosity classes',
{\it Lect. Notes in Comp.  Sci.\/}, Springer-Verlag,  Berlin,
{\bf 1592} (1999),  223--238.

\bibitem{Shp}
I.~E.~Shparlinski,  `On RSA moduli with prescribed bit patterns',
{\it Designs, Codes and Cryptography\/}, {\bf 39} (2006),
113--122.


\bibitem{VaZu} S.~A.~Vanstone and R. J. Zuccherato,
`Short RSA keys and their generation',
{\it J. Cryptology\/}, {\bf 8} (1995), 101--114.


\end{thebibliography}
\end{document}